\numberwithin{equation}{section}
\newtheorem{theorem}[equation]{Theorem}
\newtheorem{lemma}[equation]{Lemma}
\newtheorem{corollary}[equation]{Corollary}
\theoremstyle{definition}
\newtheorem{definition}[equation]{Definition}
\theoremstyle{remark}
\newcommand*{\N}{\mathbb{N}}
\newcommand*{\Z}{\mathbb{Z}}
\newcommand*{\R}{\mathbb{R}}
\newcommand*{\dif}{\mathrm{d}}
\newcommand*{\inv}{^{-1}}
\newcommand*{\E}{\mathbb{E}}
\newcommand*{\ul}[1]{\underline{#1}}
\newcommand{\ud}{\overline{d}}
\newcommand{\ld}{\underline{d}}
\begin{document}
\subjclass[2010]{37A15}
% 37    Dynamical systems and ergodic theory
% 37A   Ergodic Theory
% 37A15 General groups of measure-preserving transformations
\title{Return times theorem for amenable groups}
\author{Pavel Zorin-Kranich}
\address
{Institute of Mathematics\\
Hebrew University, Givat Ram\\
Jerusalem, 91904, Israel}
\email{pzorin@math.huji.ac.il}
\urladdr{http://math.huji.ac.il/~pzorin/}
\keywords{return times theorem, Wiener--Wintner theorem}
\begin{abstract}
We extend Bourgain's return times theorem to arbitrary locally compact second countable amenable groups.
The proof is based on a version of the Bourgain--Furstenberg--Katznelson--Ornstein orthogonality criterion.
\end{abstract}
\maketitle
\allowdisplaybreaks

\section{Introduction}
Bourgain's return times theorem states that for every ergodic measure-preserving system $(X,T)$ and $f\in L^{\infty}(X)$, for a.e.\ $x\in X$ the sequence $c_{n}=f(T^{n}x)$ is a \emph{good sequence of weights} for the pointwise ergodic theorem, i.e.\ for every measure-preserving system $(Y,S)$ and $g\in L^{\infty}(Y)$, for a.e. $y\in Y$ the averages
\[
\frac1N \sum_{n=1}^{N} c_{n} g(S^{n}y)
\]
converge as $N\to\infty$.
This has been extended to discrete amenable groups for which an analog of the Vitali covering lemma holds by Ornstein and Weiss \cite{MR1195256}.

In this article we use the Lindenstrauss random covering lemma to extend this result to not necessarily discrete locally compact second countable (lcsc) amenable groups.
It has been observed by Lindenstrauss that this is possible in the discrete case.
In the non-discrete case we have to restrict ourselves to the class of \emph{strong} F\o{}lner sequences (see Definition~\ref{def:folner}), but we will show that every lcsc amenable group admits such a sequence.

A secondary motivation is to formulate and prove the Bourgain--Furstenberg--Katznelson--Ornstein (BFKO) orthogonality criterion \cite{MR1557098} at an appropriate level of generality.
This criterion provides a sufficient condition for the values of a function along an orbit of an ergodic measure-preserving transformation to be good weights for convergence to zero in the pointwise ergodic theorem.

The original formulation of the orthogonality criterion is slightly artificial since it assumes something about the whole measure-preserving system but concludes something that only involves a single orbit.
A more conceptual approach is to find a condition that identifies good weights and to prove that it is satisfied along almost all orbits of a measure-preserving system in a separate step.
For $\Z$-actions this seems to have been first explicitly mentioned in \cite[\textsection 4]{MR1286798}.
In order to state the appropriate condition for general lcsc amenable groups we need some notation.

Throughout the article $G$ denotes a lcsc amenable group with left Haar measure $|\cdot|$ and $(F_{N})$ a F\o{}lner sequence in $G$ that is usually fixed.
The \emph{lower density} of a subset $S\subset G$ is defined by $\ld(S) := \liminf_{N} |S\cap F_{N}| / |F_{N}|$ and the \emph{upper density} is defined accordingly as $\ud(S) := \limsup_{N} |S\cap F_{N}| / |F_{N}|$.
All functions on $G$ that we consider are real-valued and bounded by $1$.
We denote averages by $\E_{g\in F_{n}}:=\frac{1}{|F_{n}|} \int_{g\in F_{n}}$.
For $c\in L^{\infty}(G)$ we let
\[
S_{\delta,L,R}(c) :=
\{a : \forall L\leq n\leq R\,
| \E_{g\in F_{n}}c(g)c(ga) | < \delta\}.
\]
Our orthogonality  condition on the map $c$ is then the following.
\begin{equation}
\tag{$\perp$}
\label{eq:cond}
\forall\delta>0\,
\exists N_{\delta}\in\N\,
\forall N_{\delta} \leq L \leq R\quad
\ld(S_{\delta,L,R}(c))
> 1-\delta.
\end{equation}
A very rough approximation to this condition is that there is little correlation between $c$ and its translates.
The significance of this condition is explained by the following statements.
\begin{lemma}
\label{lem:ae-perp}
Let $(X,\mu,G)$ be an ergodic measure-preserving system and $f\in L^{\infty}(X)$ be orthogonal to the Kronecker factor.
Then for a.e.\ $x\in X$ the map $g\mapsto f(gx)$ satisfies \eqref{eq:cond}.
\end{lemma}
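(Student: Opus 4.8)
The plan is to pass to the Cartesian square $X\times X$ with the diagonal $G$-action and the invariant measure $\nu:=\mu\otimes\mu$, and to work with $\tilde f:=f\otimes f\in L^{\infty}(X\times X)$: for all $x$ and $a$, $\E_{g\in F_{n}}f(gx)f(gax)=\E_{g\in F_{n}}\tilde f(g\cdot(x,ax))$ is a Birkhoff average of $\tilde f$ over $F_{n}$ at the point $(x,ax)$. The first step is the spectral input: since $f$ is orthogonal to the Kronecker factor, $\tilde f$ is orthogonal to the $G$-invariant functions in $L^{2}(\nu)$ — the Hilbert space form of weak mixing for amenable group actions, which I would recall or cite — so the conditional expectation of $\tilde f$ onto the invariant $\sigma$-algebra of $X\times X$ vanishes. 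The pointwise ergodic theorem along the strong F\o{}lner sequence $(F_{n})$ in $(X\times X,\nu,G)$ then gives $A_{n}(z):=\E_{g\in F_{n}}\tilde f(g\cdot z)\to 0$ for $\nu$-a.e.\ $z$, and since $\sup_{n}|A_{n}|\in L^{2}(\nu)$ by the maximal inequality, the tail maximal functions $M_{L}:=\sup_{n\geq L}|A_{n}|$ satisfy $\|M_{L}\|_{L^{2}(\nu)}\to 0$ as $L\to\infty$. Consequently, by Fubini and monotone convergence, for $\mu$-a.e.\ $x$ the slice integrals $\rho_{L}(x):=\int_{X}M_{L}(x,y)^{2}\,\dif\mu(y)$ are non-increasing in $L$ and tend to $0$.

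The second step is the transfer to a single orbit. Fix $x$ in the full-measure set on which $\E_{a\in F_{M}}\psi(ax)\to\int_{X}\psi\,\dif\mu$ for all $\psi\in C(X)$ — we may pass to a compact metric model with a continuous action — so that the empirical measures $\E_{a\in F_{M}}\delta_{(x,ax)}$ converge weak-$*$ to $\delta_{x}\otimes\mu$. If $\tilde f$ is continuous, then $z\mapsto\max_{L\leq n\leq R}|A_{n}(z)|^{2}$ is continuous and hence $\E_{a\in F_{M}}\max_{L\leq n\leq R}|A_{n}(x,ax)|^{2}\to\int_{X}\max_{L\leq n\leq R}|A_{n}(x,y)|^{2}\,\dif\mu(y)\leq\rho_{L}(x)$ as $M\to\infty$. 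For a general $f\in L^{\infty}(X)$ one reduces to this case by approximating $f$ in $L^{2}(\mu)$ by the smoothings $f_{\varphi}$ given by $f_{\varphi}(x):=\int_{G}\varphi(g)\,f(gx)\,\dif g$, where $\varphi\in C_{c}(G)$ is a probability density: these are still bounded by $1$, still orthogonal to the (invariant, hence translation-stable) Kronecker factor, and continuous on the compact metric model associated to the separable $G$-invariant $C^{*}$-algebra they generate. The error introduced in $\E_{g\in F_{n}}c(g)c(ga)$ is, by Cauchy--Schwarz, controlled uniformly in $a$ by ergodic maximal functions of $|f-f_{\varphi}|^{2}$ on $X$, which are small by the weak $(1,1)$ maximal inequality; letting $\varphi$ run through an approximate identity one obtains, for $\mu$-a.e.\ $x$ and all $L\leq R$,
\[
\limsup_{M}\ \E_{a\in F_{M}}\ \max_{L\leq n\leq R}\bigl|\E_{g\in F_{n}}f(gx)f(gax)\bigr|^{2}\ \leq\ \rho_{L}(x).
\]

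From here \eqref{eq:cond} drops out. With $c(g):=f(gx)$, the complement of $S_{\delta,L,R}(c)$ is $\{a:\exists\,L\leq n\leq R,\ |\E_{g\in F_{n}}c(g)c(ga)|\geq\delta\}$, so Chebyshev together with the displayed bound gives $\ud(S_{\delta,L,R}(c)^{c})\leq\delta^{-2}\rho_{L}(x)$ for all $L\leq R$. Given $\delta>0$, choose $N_{\delta}$ with $\rho_{N_{\delta}}(x)<\delta^{3}$; by monotonicity of $\rho_{L}(x)$ in $L$, for all $N_{\delta}\leq L\leq R$ we then have $\ld(S_{\delta,L,R}(c))=1-\ud(S_{\delta,L,R}(c)^{c})>1-\delta$, which is \eqref{eq:cond}; intersecting over a sequence $\delta\to 0$ and over the countably many almost-everywhere conditions used handles almost every $x$. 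I expect the transfer of the second step to be the main difficulty: the points $(x,ax)$ all lie in the $\nu$-null slice $\{x\}\times X$, so the pointwise ergodic theorem on $X\times X$ does not apply along them directly, and one must route through weak-$*$ convergence of the empirical measures — which only sees continuous functions — hence through the smoothing argument and an estimate of the approximation error that is uniform in the range $[L,R]$; that uniformity is exactly what produces the shape of \eqref{eq:cond}.
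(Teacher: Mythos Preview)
Your first step coincides with the paper's: orthogonality to the Kronecker factor forces the ergodic averages of $f\otimes f$ on $X\times X$ to converge to $0$ (the paper cites Dye's theorem for this), so by the Lindenstrauss pointwise ergodic theorem $\E_{g\in F_{n}}f(gx)f(g\xi)\to 0$ for $\mu\otimes\mu$-a.e.\ $(x,\xi)$, i.e.\ condition \eqref{eq:BFKO-cond} holds for a.e.\ $x$.

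For the transfer to a single orbit the paper proceeds differently and more economically. Rather than passing to a topological model and invoking weak-$*$ convergence of empirical measures, it introduces the notion of a \emph{fully generic} point --- one generic for every element of the closed convolution-invariant subalgebra of $L^{\infty}(X)$ generated by $f$ (this algebra is separable, so a.e.\ point qualifies). The key observation is that for each fixed $x$ the slice functions $\xi\mapsto\E_{g\in F_{n}}f(gx)f(g\xi)$ are already of the form $c*f$ with $c\in L^{1}(G)$, hence lie in this algebra; by Stone--Weierstrass so does any continuous function of finitely many of them. After Egorov produces a set of measure $>1-\delta$ on which the convergence in \eqref{eq:BFKO-cond} is uniform for $n\geq N_{\delta}$, one builds such a function $h$ with $\int h>1-\delta$, and genericity of $x$ for $h$ gives the density bound. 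This makes your smoothing $f\mapsto f_{\varphi}$ superfluous: the whole point of the smoothing is to produce convolutions, but the averages you actually need are convolutions already.

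This matters because the smoothing argument is the weakest part of your sketch. The claim that the error is ``controlled uniformly in $a$ by ergodic maximal functions of $|f-f_{\varphi}|^{2}$, which are small by the weak $(1,1)$ maximal inequality'' is imprecise on two counts: the maximal function is not small in $L^{1}$, only in weak $L^{1}$; and the contribution to the error living at $ax$ is not uniform in $a$ but only controllable in density (which then has to be transferred back to the orbit of $x$, raising again the question of which functions $x$ is generic for). These issues are repairable, but the paper's route via full genericity avoids them entirely. Your Chebyshev endgame from $\rho_{L}(x)<\delta^{3}$ is a perfectly good alternative to the paper's Egorov-based endgame once the transfer is in place.
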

\begin{theorem}
\label{thm:good-weight-ptw-conv-to-zero}
Assume that $(F_{N})$ is a tempered strong F\o{}lner sequence and $c\in L^{\infty}(G)$ satisfies the condition \eqref{eq:cond}.
Then for every ergodic measure-preserving system $(X,G)$ and $f\in L^{\infty}(X)$ we have
\[
\lim_{N\to\infty} \E_{g\in F_{N}} c(g) f(gx) = 0
\quad \text{for a.e.\ } x\in X.
\]
\end{theorem}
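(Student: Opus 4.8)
The plan is to reduce the pointwise statement to a maximal inequality plus convergence on a dense class, then handle the dense class by exploiting condition \eqref{eq:cond} directly. More concretely:

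First I would invoke the pointwise ergodic theorem for tempered Følner sequences (Lindenstrauss) to get the maximal inequality: the sublinear operators $f \mapsto \sup_N |\E_{g\in F_N} c(g) f(gx)|$ are bounded on $L^2(X)$, uniformly in $c$ with $\|c\|_\infty \le 1$, since they are dominated by the Hardy–Littlewood-type maximal operator $\sup_N \E_{g\in F_N} |f(gx)|$. By a standard Banach-principle argument, the set of $f\in L^2(X)$ for which $\E_{g\in F_N} c(g) f(gx) \to 0$ a.e.\ is closed in $L^2$. Hence it suffices to prove the convergence for $f$ in a dense subset of $L^2(X)$.

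Second, I would split $L^2(X)$ using the Kronecker factor $\mathcal{K}$. For $f$ measurable with respect to $\mathcal{K}$, the averages $\E_{g\in F_N} c(g) f(gx)$ converge a.e.\ by an appeal to the classical return-times/Wiener–Wintner circle of ideas for compact group rotations (here $c$ plays no special role beyond boundedness, and on the Kronecker factor one can reduce to eigenfunctions, where the averages are essentially twisted averages $\E_{g\in F_N} c(g)\chi(g)$ of a scalar sequence, whose convergence is not needed — only boundedness — wait, we actually need the full statement only for $f\perp\mathcal{K}$; so it is cleaner to note that functions measurable w.r.t.\ $\mathcal{K}$ together with functions orthogonal to $\mathcal{K}$ span a dense subspace, and the assertion is a \emph{limit equal to zero}, so I only must prove it for $f\perp\mathcal{K}$, and separately check that $\mathcal{K}$-measurable $f$ also give limit zero — but that is false in general). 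Let me restate: the conclusion claims the limit is $0$ for \emph{every} $f$, which forces $c$ to be, in effect, orthogonal to all eigenfunction-twists; indeed condition \eqref{eq:cond} is exactly what rules out correlation with any character, so for $\mathcal{K}$-measurable $f$ one shows $\E_{g\in F_N} c(g)f(gx)\to 0$ by reducing to $f=\chi$ an eigenfunction with eigenvalue $\lambda_g$ and observing $|\E_{g\in F_N}c(g)\chi(gx)|^2 = |\chi(x)|^2|\E_{g\in F_N}c(g)\lambda_g|^2$, and then bounding $|\E_{g\in F_N}c(g)\lambda_g|^2 = \E_{g,h\in F_N} c(g)\ol{c(h)}\lambda_{g}\ol{\lambda_h}$, which after a change of variables and the Følner property is controlled by averages of $|\E_{g\in F_n}c(g)c(ga)|$ over $a$, hence made small by \eqref{eq:cond} via van der Corput.

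Third — and this is the heart — for $f\perp\mathcal{K}$ I would run the Bourgain–Furstenberg–Katznelson–Ornstein argument. The key inequality to establish is a van der Corput / uniformity estimate: for $f$ orthogonal to the Kronecker factor,
\[
\limsup_{N} \Bigl\| \E_{g\in F_N} c(g) f(g\cdot) \Bigr\|_{L^2(X)}^2
\;\lesssim\;
\limsup_{n} \E_{a\in F_{?}} \bigl| \E_{g\in F_n} c(g) c(ga) \bigr| \cdot \bigl| \langle f, U_a f\rangle \bigr| + (\text{error}),
\]
where $U_a$ is the Koopman operator; the point is that $a\mapsto\langle f,U_af\rangle$ is, by the spectral theorem and $f\perp\mathcal{K}$, a function with no nontrivial almost-periodic part, so its averages over the set where $|\E_{g\in F_n}c(g)c(ga)|$ is large (a set of density $<\delta$ by \eqref{eq:cond}) are small, while on the complementary large set the factor $|\E_{g\in F_n}c(g)c(ga)|<\delta$ is itself small. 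Combining the two regimes gives $\limsup_N\|\cdots\|_{L^2}^2 \le C\delta$ for every $\delta$, hence the $L^2$-limit is $0$; upgrading to a.e.\ convergence along the full sequence then uses the maximal inequality from the first step together with a decomposition of $f$ as a uniform part (handled above) plus an $L^2$-small part (handled by the maximal inequality), à la Banach principle.

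The main obstacle I anticipate is making the van der Corput step rigorous in the non-discrete, non-abelian setting: one must commute averages over $F_N$ in $g$ with averages over a \emph{difference set} in $a$, and in a general lcsc amenable group the naive difference-set manipulation $F_N^{-1}F_N$ need not be Følner, so one has to use the \emph{strong} Følner property (Definition~\ref{def:folner}) to ensure the boundary terms from the telescoping are negligible, and one has to be careful that the change of variables $g\mapsto ga$ distorts Haar measure trivially (it does, $|\cdot|$ is left-invariant, but one needs $F_N a \triangle F_N$ small, i.e.\ right-Følner-type control along the relevant $a$'s) — this is precisely where temperedness and the strong Følner condition must be used in tandem, and getting the quantitative bookkeeping right is the crux.
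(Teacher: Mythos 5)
There is a genuine gap at the heart of your argument, in the treatment of $f$ orthogonal to the Kronecker factor. Your van der Corput estimate, even granting the difference-set issues you yourself flag, would only yield $\lim_N \|\E_{g\in F_N} c(g) f(g\cdot)\|_{L^2(X)} = 0$, i.e.\ \emph{mean} convergence of the weighted averages. The maximal inequality lets you pass a.e.\ convergence from a dense class of functions $f$ to its $L^2$-closure, but you never exhibit a dense subclass of the orthogonal complement of the Kronecker factor on which a.e.\ convergence is already known: norm convergence of the averages to zero gives a.e.\ convergence only along a subsequence, and your proposed decomposition into a ``uniform part (handled above)'' plus an ``$L^2$-small part'' is circular, since ``handled above'' again means only norm convergence. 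This is precisely the difficulty that makes the return times theorem nontrivial; if mean convergence plus the maximal inequality sufficed, the statement would already follow from the mean ergodic theorem.

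The paper supplies exactly the missing ingredient, and in fact needs no Kronecker decomposition of $f$ at all: condition \eqref{eq:cond} is a hypothesis on $c$ alone, and Lemma~\ref{lem:orth} is a finitary statement about an \emph{arbitrary} bounded function $f$ on $G$. It is proved by placing signed copies of initial segments of $c$ at the points of a Lindenstrauss random covering of the bad sets $A_{(j)}$; the resulting functions $c^{(1)},\dots,c^{(K)}$ are pairwise almost orthogonal on a large window $I$ (this is where \eqref{eq:cond} and the strong F\o{}lner boundary conditions enter) while each correlates with $f$ by at least $\epsilon$ times the covered measure, so a Bessel-type inequality forces $\frac1K\sum_j d_I(A_{(j)})$ to be at most $5C/(\epsilon\sqrt K)$. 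Theorem~\ref{thm:good-weight-ptw-conv-to-zero} then follows by contradiction and transference: a failure of a.e.\ convergence on a positive-measure set produces, via a generic point for a suitably built continuous function $h$, a positive-lower-density set of translates $a$ that is bad at every one of $K$ scales, contradicting the lemma for $K$ large. Your secondary worry is also a real one: \eqref{eq:cond} controls the density of $S_{\delta,L,R}(c)$ along $(F_N)$, whereas the van der Corput expansion of $|\E_{g\in F_N}\cdots|^2$ weights $a\in F_N\inv F_N$ by $|F_N\cap F_N a\inv|/|F_N|$, and these two notions of ``most $a$'' are not comparable in general; the covering-lemma argument sidesteps this entirely.
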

This, together with a Wiener--Wintner type result, leads to the following return times theorem.
\begin{theorem}
\label{thm:rtt-amenable}
Let $G$ be a lcsc group with a tempered strong F\o{}lner sequence $(F_{n})$.
Then for every ergodic measure-preserving system $(X,G)$ and every $f\in L^{\infty}(X)$ there exists a full measure set $\tilde X\subset X$ such that for every $x\in\tilde X$ the map $g\mapsto f(gx)$ is a good weight for the pointwise ergodic theorem along $(F_{n})$.
\end{theorem}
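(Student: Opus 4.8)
The plan is to split $f$ along the Kronecker factor and to dispatch the two pieces using the two results already at hand, Lemma~\ref{lem:ae-perp} and Theorem~\ref{thm:good-weight-ptw-conv-to-zero}. It is convenient to allow complex-valued weights (the stated real version is recovered by taking real parts) and to note that the good weights along $(F_N)$ form a vector space. Normalise $\|f\|_\infty\le 1$, let $\mathcal K$ denote the Kronecker factor of the ergodic system $(X,\mu,G)$, and write $f=f_a+f_s$ with $f_a=\E(f\mid\mathcal K)$ and $f_s=f-f_a$, so that $f_s$ is orthogonal to $\mathcal K$. Recall that $L^2(\mathcal K)$ is the closed linear span of eigenfunctions $\phi$, each of a.e.\ constant modulus and satisfying $\phi(gx)=\chi_\phi(g)\phi(x)$ for a character $\chi_\phi\colon G\to S^1$; fix finite linear combinations $P_k$ of eigenfunctions with $P_k\to f_a$ in $L^2(\mathcal K)$. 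I will produce full-measure sets $X_s,X_a\subset X$ on which $g\mapsto f_s(gx)$, respectively $g\mapsto f_a(gx)$, is a good weight, and then $\tilde X=X_s\cap X_a$ will serve for $f$. Throughout one also intersects with the (full-measure, by Fubini) sets of $x$ for which $f$, $f_a$, $f_s$, the $P_k$ and the relevant $\phi$ are defined along a.e.\ point of the orbit of $x$ and satisfy the eigenfunction identities there; I will not dwell on this bookkeeping.

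\emph{The stochastic part.} Since $f_s$ is orthogonal to $\mathcal K$, Lemma~\ref{lem:ae-perp} provides a full-measure set $X_s$ on which $g\mapsto f_s(gx)$ satisfies \eqref{eq:cond}. As $(F_N)$ is a tempered strong F\o{}lner sequence, Theorem~\ref{thm:good-weight-ptw-conv-to-zero} applies and yields, for every $x\in X_s$ and every \emph{ergodic} system $(Y,G)$ and $h\in L^\infty(Y)$, that $\E_{g\in F_N}f_s(gx)h(gy)\to 0$ for a.e.\ $y$. Passing to the ergodic decomposition of a general target system and using Fubini extends this to an arbitrary, not necessarily ergodic, $(Y,G)$. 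Hence for every $x\in X_s$ the map $g\mapsto f_s(gx)$ is a good weight, indeed one that forces the averages to $0$.

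\emph{The almost periodic part.} The decisive sub-step is a Wiener--Wintner statement: every character of $G$ is a good weight. Given a character $\chi\colon G\to S^1$, a target system $(Y,\nu,G)$ and $h\in L^\infty(Y)$, let $G$ act on the circle $S^1$ (with normalised Haar measure $\m$) by $s\mapsto\chi(g)s$ and apply Lindenstrauss's pointwise ergodic theorem (legitimate since $(F_N)$ is tempered) on $(Y\times S^1,\nu\times\m)$ to the function $(y,s)\mapsto s\,h(y)$: its averages equal $s\,\E_{g\in F_N}\chi(g)h(gy)$ and converge for a.e.\ $(y,s)$, so $\E_{g\in F_N}\chi(g)h(gy)$ converges for a.e.\ $y$. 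Consequently $g\mapsto\phi(gx)=\phi(x)\chi_\phi(g)$ is a good weight for every $x$ in a full-measure set, and hence so is each $g\mapsto P_k(gx)$, being a finite linear combination of these. Next, applying Lindenstrauss's theorem on $(X,\mu,G)$ to $|f_a-P_k|\in L^1(X)$ produces a full-measure set $X_a$ (the intersection over $k$ of the corresponding sets, together with the bookkeeping sets above) on which $\E_{g\in F_N}|f_a-P_k|(gx)\to\|f_a-P_k\|_{L^1(\mathcal K)}$ as $N\to\infty$, while $\|f_a-P_k\|_{L^1(\mathcal K)}\to 0$ as $k\to\infty$. For $x\in X_a$ and any target,
\[
\bigl|\E_{g\in F_N}f_a(gx)h(gy)-\E_{g\in F_N}P_k(gx)h(gy)\bigr|\le\|h\|_\infty\,\E_{g\in F_N}|f_a-P_k|(gx),
\]
whose $\limsup$ in $N$ is at most $\|h\|_\infty\|f_a-P_k\|_{L^1(\mathcal K)}$, uniformly in $y$; since each $\E_{g\in F_N}P_k(gx)h(gy)$ converges for a.e.\ $y$, a three-$\varepsilon$ argument gives convergence of $\E_{g\in F_N}f_a(gx)h(gy)$ for a.e.\ $y$. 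Thus for every $x\in X_a$ the map $g\mapsto f_a(gx)$ is a good weight.

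Combining the two parts, for $x\in\tilde X=X_s\cap X_a$ and any target system $(Y,G)$ and $h\in L^\infty(Y)$ one has $\E_{g\in F_N}f(gx)h(gy)=\E_{g\in F_N}f_a(gx)h(gy)+\E_{g\in F_N}f_s(gx)h(gy)$, which converges for a.e.\ $y$ by the two previous steps; this is the assertion. The real content beyond quoting Lemma~\ref{lem:ae-perp} and Theorem~\ref{thm:good-weight-ptw-conv-to-zero} lies in the Wiener--Wintner sub-step and the $L^2$-approximation. I expect the point requiring most care to be that a \emph{single} null set in $X$ must be discarded in order for the conclusion to hold for \emph{all} target systems at once; this works out because Theorem~\ref{thm:good-weight-ptw-conv-to-zero} and the circle-product argument already range over all targets, and the ergodic decomposition reduces the non-ergodic targets to ergodic ones.
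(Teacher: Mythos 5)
Your overall architecture (split off the Kronecker factor, kill the orthogonal part with Lemma~\ref{lem:ae-perp} plus Theorem~\ref{thm:good-weight-ptw-conv-to-zero}, handle the almost periodic part by a Wiener--Wintner argument on the target together with an $L^{1}$/$L^{2}$ approximation) is exactly the paper's, and your stochastic part, your three-$\epsilon$ approximation step, and your remark about reducing non-ergodic targets by ergodic decomposition are all fine. The gap is in the almost periodic part: you assert that $L^{2}(\mathcal K)$ is the closed linear span of eigenfunctions $\phi$ satisfying $\phi(gx)=\chi_{\phi}(g)\phi(x)$ for characters $\chi_{\phi}\colon G\to S^{1}$. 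That is true for abelian $G$, but the theorem is stated for arbitrary lcsc amenable groups, and for non-abelian $G$ the Kronecker factor is spanned by the \emph{finite-dimensional} $G$-invariant subspaces of $L^{2}(X)$, not by one-dimensional ones. Characters of $G$ factor through the abelianization, so the span of genuine eigenfunctions can be a proper (even trivial) subspace of $L^{2}(\mathcal K)$: for a compact non-abelian second countable $G$ acting on itself by translation, Peter--Weyl makes all of $L^{2}(G)$ the Kronecker factor, while the eigenfunctions span only the characters of $G/\overline{[G,G]}$. Your $P_{k}$ therefore do not approximate $f_{a}$ in general, and a whole piece of $f$ is never treated.

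The repair is the one the paper carries out: approximate $f_{a}$ by functions lying in finite sums of finite-dimensional invariant subspaces, for which $f(gx)=\phi(\chi(g)u_{0})$ with a continuous unitary representation $\chi\colon G\to U(d)$, and run your product-system argument on $Y\times\overline{\chi(G)}$ instead of $Y\times S^{1}$. Note that your circle trick does not survive this generalization verbatim: for a character the average over $Y\times S^{1}$ factors as $s\,\E_{g\in F_{N}}\chi(g)h(gy)$, so a.e.\ convergence in $(y,s)$ immediately pins down convergence at the one point $s=u_{0}$ you need; for a matrix coefficient $\phi(\chi(g)\omega)$ no such factorization holds, and passing from ``a.e.\ $\omega$'' to the specific $\omega=u_{0}$ requires the extra step the paper supplies via uniform continuity of $\phi$ and uniqueness of Haar measure on $\overline{\chi(G)}$ (Corollaries~\ref{cor:good-weight-everywhere-conv-to-zero} and~\ref{cor:wiener-wintner}). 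With those two modifications your proof closes up and coincides with the paper's.
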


\section{Background and tools}
\subsection{F\o{}lner sequences}
A lcsc group is called \emph{amenable} if it admits a weak F\o{}lner sequence in the sense of the following definition.
\begin{definition}
\label{def:folner}
Let $G$ be a lcsc group with left Haar measure $|\cdot|$.
A sequence of nonnull compact sets $(F_{n})$ is called
\begin{enumerate}
\item a \emph{weak F\o{}lner sequence} if for every compact set $K\subset G$ one has $|F_{n} \Delta KF_{n}| / |F_{n}| \to 0$ and
\item a \emph{strong F\o{}lner sequence} if for every compact set $K\subset G$ one has $|\partial_{K}(F_{n})|/|F_{n}| \to 0$, where $\partial_{K}(F)= K\inv F \cap K\inv F^{\complement}$ is the \emph{$K$-boundary} of $F$.
\item \emph{($C$-)tempered} if there exists a constant $C$ such that
\[
|\cup_{i<j} F_{i}\inv F_{j}| < C |F_{j}|
\quad\text{for every }j.
\]
\end{enumerate}
\end{definition}
Every strong F\o{}lner sequence is also a weak F\o{}lner sequence.
In countable groups the converse is also true, but already in $\R$ this is no longer the case: let for example $(F_{n})$ be a sequence of nowhere dense sets $F_{n}\subset [0,n]$ of Lebesgue measure $n-1/n$, say.
This is a weak but not a strong F\o{}lner sequence (in fact, $\partial_{[-1,0]} F_{n}$ is basically $[0,n+1]$).
However, a weak F\o{}lner sequence can be used to construct a strong F\o{}lner sequence.
\begin{lemma}
Assume that $G$ admits a weak F\o{}lner sequence.
Then $G$ also admits a strong F\o{}lner sequence.
\end{lemma}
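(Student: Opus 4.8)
The plan is to smooth the members $F_{n}$ of a weak F\o{}lner sequence by convolving with a fixed bump function and to extract strong F\o{}lner sets as super-level sets of the resulting continuous functions, the boundary estimate coming from a coarea-type identity together with the weak F\o{}lner property. Since $G$ is $\sigma$-compact, first fix an exhaustion $K_{1}\subseteq K_{2}\subseteq\cdots$ of $G$ by compact symmetric neighbourhoods of $e$; because $\partial_{K}(F)\subseteq\partial_{K'}(F)$ whenever $K\subseteq K'$, it suffices to produce, for each $j$, a sequence of nonnull compact sets $(G^{(j)}_{n})_{n}$ with $|\partial_{K_{j}}(G^{(j)}_{n})|/|G^{(j)}_{n}|\to0$, and then diagonalize over $j$. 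So fix $K:=K_{j}$ and the given weak F\o{}lner sequence $(F_{n})$.

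The key observation is that replacing $F_{n}$ by $\tilde F_{n}:=KF_{n}$ makes all the left translates $k\tilde F_{n}$, $k\in K$, coincide up to a null set: since $K$ is symmetric and contains $e$ we have $F_{n}\subseteq k\tilde F_{n}$ for every $k\in K$, so $F_{n}\cap(a\tilde F_{n}\Delta b\tilde F_{n})=\emptyset$ while $a\tilde F_{n}\Delta b\tilde F_{n}\subseteq K^{2}F_{n}$ for all $a,b\in K$; hence
\[
A_{n}:=\bigcup_{a,b\in K}\bigl(a\tilde F_{n}\Delta b\tilde F_{n}\bigr)\subseteq K^{2}F_{n}\setminus F_{n},
\qquad
|A_{n}|\le|K^{2}F_{n}|-|F_{n}|=o(|F_{n}|)=o(|\tilde F_{n}|)
\]
by the weak F\o{}lner property for the compact set $K^{2}$ (which in particular gives $|\tilde F_{n}|=(1+o(1))|F_{n}|$).

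Next fix a continuous $\phi\colon G\to[0,\infty)$ with compact support and $\int\phi=1$, and set $h_{n}:=\mathbf{1}_{\tilde F_{n}}*\phi$ for the left Haar convolution $(f*g)(x)=\int_{G}f(y)g(y\inv x)\,\dif y$. Then $h_{n}$ is continuous, takes values in $[0,1]$, is supported in the compact set $\tilde F_{n}\cdot\operatorname{supp}\phi$, and $\int_{G}h_{n}=|\tilde F_{n}|$. Moreover $h_{n}(kx)=(\mathbf{1}_{k\inv\tilde F_{n}}*\phi)(x)$, so, by the triangle inequality for convolutions together with $|\mathbf{1}_{C}-\mathbf{1}_{D}|=\mathbf{1}_{C\Delta D}$, for every $x$
\[
\max_{k\in K}h_{n}(kx)-\min_{k\in K}h_{n}(kx)\ \le\ (\mathbf{1}_{A_{n}}*\phi)(x),
\]
and hence $\int_{G}\bigl(\max_{k\in K}h_{n}(kx)-\min_{k\in K}h_{n}(kx)\bigr)\,\dif x\le|A_{n}|=o(|\tilde F_{n}|)$. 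For $t\in(0,1)$ the super-level set $E^{t}_{n}:=\{h_{n}\ge t\}$ is compact, and Fubini gives $\int_{0}^{1}|E^{t}_{n}|\,\dif t=\int_{G}h_{n}=|\tilde F_{n}|$; since $x\in\partial_{K}(E^{t}_{n})$ exactly when $\min_{k\in K}h_{n}(kx)<t\le\max_{k\in K}h_{n}(kx)$, Fubini also gives
\[
\int_{0}^{1}|\partial_{K}(E^{t}_{n})|\,\dif t=\int_{G}\bigl(\max_{k\in K}h_{n}(kx)-\min_{k\in K}h_{n}(kx)\bigr)\,\dif x=o(|\tilde F_{n}|).
\]
Writing $\lambda_{n}:=\bigl(\tfrac{1}{|\tilde F_{n}|}\int_{0}^{1}|\partial_{K}(E^{t}_{n})|\,\dif t\bigr)^{1/2}\to0$ and integrating $|\partial_{K}(E^{t}_{n})|-\lambda_{n}|E^{t}_{n}|$ in $t$ shows that for all large $n$ there is a level $t_{n}\in(0,1)$ with $|\partial_{K}(E^{t_{n}}_{n})|<\lambda_{n}|E^{t_{n}}_{n}|$; the strict inequality forces $|E^{t_{n}}_{n}|>0$, and $t_{n}>0$ forces $E^{t_{n}}_{n}$ to be compact, so $G^{(j)}_{n}:=E^{t_{n}}_{n}$ works. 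Diagonalizing over $j$ produces a strong F\o{}lner sequence.

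The one step that is more than bookkeeping is the passage from $F_{n}$ to $KF_{n}$, which is exactly what makes the translates $\{k\tilde F_{n}:k\in K\}$ agree up to measure $o(|\tilde F_{n}|)$ — the feature that a weak F\o{}lner sequence can fail to have (cf.\ the nowhere dense example in $\R$ discussed above, where the corresponding ``cores'' are negligible). Everything else — left uniform continuity of $\mathbf{1}_{\tilde F_{n}}*\phi$, the support and mass computations, the two applications of Fubini, and the averaging over the level parameter $t$ — is routine.
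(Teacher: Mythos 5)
Your proof is correct, and its decisive idea coincides with the paper's: pass from $F_n$ to $\tilde F_n=KF_n$, which is exactly what forces the $K$-translates of the set to agree up to the set $K^2F_n\setminus F_n$, whose measure is $o(|F_n|)$ by the weak F\o{}lner property applied to $K^2$ (equivalently $K\inv K$). The difference is that the paper stops right there: since $F_n\cap K\inv(KF_n)^{\complement}=\emptyset$, one has $\partial_K(KF_n)=K\inv KF_n\cap K\inv(KF_n)^{\complement}\subseteq K\inv KF_n\setminus F_n$, so $KF_n$ itself is already the required strong F\o{}lner set and no smoothing is needed. Your extra layer --- convolving $1_{\tilde F_n}$ with a bump function and selecting a good level set by averaging the coarea identity in $t$ --- is sound but redundant: your own inclusion $a\tilde F_n\Delta b\tilde F_n\subseteq K^2F_n\setminus F_n$, specialized to $a=e$ and arbitrary $b\in K$, already gives $\partial_K(\tilde F_n)\subseteq K^2F_n\setminus F_n$ and hence the conclusion for $\tilde F_n$ directly. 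If you do keep the longer route, two small repairs are needed: the uncountable union $A_n=\bigcup_{a,b\in K}(a\tilde F_n\Delta b\tilde F_n)$ need not be measurable, so it should be replaced by the measurable superset $K^2F_n\setminus F_n$ in the convolution estimate; and for non-unimodular $G$ one only gets $\sup h_n\le\int\phi(z)\Delta(z)\inv\,\dif z$ rather than $\sup h_n\le 1$ (here $\Delta$ denotes the modular function), so the level parameter should range over $(0,M]$ with $M$ that constant --- which changes nothing in the averaging. The initial reduction to a single compact $K$ followed by diagonalization is the same device the paper delegates to \cite[Lemma 2.6]{2012arXiv1205.3649P}.
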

\begin{proof}
By the argument in \cite[Lemma 2.6]{2012arXiv1205.3649P} it suffices, given $\epsilon>0$ and a compact set $K\subset G$, to find a compact set $F$ with $|\partial_{K}(F)|/|F|<\epsilon$.
Let $(F_{n})$ be a weak F\o{}lner sequence, then there exists $n$ such that $|K\inv K F_{n} \Delta F_{n}| < \epsilon |F_{n}|$.
Set $F=KF_{n}$, then
\[
\partial_{K}F
= K\inv KF_{n} \cap K\inv (KF_{n})^{\complement}
\subset K\inv KF_{n}\cap F_{n}^{\complement},
\]
and this has measure less than $\epsilon |F_{n}| \leq \epsilon |F|$.
\end{proof}
Since every weak (hence also every strong) F\o{}lner sequence has a tempered subsequence \cite[Proposition 1.4]{MR1865397} this implies that every lcsc amenable group admits a tempered strong F\o{}lner sequence.

\subsection{Fully generic points}
Let $(X,G)$ be an ergodic measure-preserving system and $f\in L^{\infty}(X)$.
Recall that a point $x\in X$ is called \emph{generic} for $f$ if
\[
\lim_{n} \E_{g\in F_{n}} f(gx) = \int_{X}f.
\]
In the context of countable group actions fully generic points for $f$ are usually defined as points that are generic for every function in the closed $G$-invariant algebra spanned by $f$.
For uncountable groups this is not a good definition since this algebra need not be separable.
The natural substitute for shifts of a function $f\in L^{\infty}(X)$ is provided by convolutions
\[
c*f(x) = \int_{G} c(g) f(g\inv x) \dif g,
\quad
c\in L^{1}(G).
\]
Since $L^{1}(G)$ is separable and convolution is continuous as an operator $L^{1}(G)\times L^{\infty}(X) \to L^{\infty}(X)$, the closed convolution-invariant algebra generated by $f$ is separable.

We call a point $x\in X$ \emph{fully generic} for $f$ if it is generic for every function in this algebra.
In view of the Lindenstrauss pointwise ergodic theorem \cite[Theorem 1.2]{MR1865397}, if $(F_{n})$ is tempered, then for every $f\in L^{\infty}(X)$ a.e.\ $x\in X$ is fully generic.

Now we verify that the BFKO condition implies \eqref{eq:cond}.
\begin{lemma}
\label{lem:to-zero-in-density}
Let $G$ be a lcsc group with a tempered F\o{}lner sequence $(F_{n})$.
Let $(X,G)$ be an ergodic measure-preserving system and $f\in L^{\infty}(X)$ be bounded by $1$.
Let $x\in X$ be a fully generic point for $f$ such that
\begin{equation}
\label{eq:BFKO-cond}
\lim_{n}\E_{g\in F_{n}} f(gx)f(g\xi) = 0
\quad\text{for a.e. }\xi\in X.
\end{equation}
Then the map $g\mapsto f(gx)$ satisfies \eqref{eq:cond}.
\end{lemma}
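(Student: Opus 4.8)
The plan is to express the local correlations $\E_{g\in F_{n}}c(g)c(ga)$ as the values, along the orbit of $x$, of functions lying in the closed convolution-invariant algebra $\mathcal A$ generated by $f$, and then to push the $\mu$-a.e.\ decay supplied by \eqref{eq:BFKO-cond} through the full genericity of $x$, turning it into decay in density in the variable $a$.

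First I would introduce, for $\xi\in X$, the correlation function $\phi_{n}(\xi):=\E_{g\in F_{n}}f(gx)f(g\xi)$. Because the action is by a group, $c(g)c(ga)=f(gx)f(g(ax))$, so $\E_{g\in F_{n}}c(g)c(ga)=\phi_{n}(ax)$. For $x$ fixed, $\phi_{n}$ is the convolution of $f$ with the function $g\mapsto|F_{n}|^{-1}\mathbf 1_{F_{n}}(g\inv)f(g\inv x)$ (corrected by the modular function of $G$), which lies in $L^{1}(G)$ with norm at most $1$ since $F_{n}$ has finite Haar measure; hence $\phi_{n}\in\mathcal A$, and so does every polynomial in $\phi_{L},\dots,\phi_{R}$ with vanishing constant term. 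For $L\le R$ the finite maximum $\Psi_{L,R}:=\max_{L\le n\le R}|\phi_{n}|$ is a uniform limit on $[-1,1]^{R-L+1}$ of such polynomials, so, $x$ being fully generic for $f$ (and trivially generic for constants), it is generic for $\Psi_{L,R}$:
\[
\lim_{N}\E_{a\in F_{N}}\Psi_{L,R}(ax)=\int_{X}\Psi_{L,R}.
\]

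Condition \eqref{eq:BFKO-cond} says precisely that $\phi_{n}\to 0$ $\mu$-a.e.; since $|\phi_{n}|\le 1$, the tails $\epsilon_{L}:=\int_{X}\sup_{n\ge L}|\phi_{n}|$ decrease to $0$ by dominated convergence, and $\int_{X}\Psi_{L,R}\le\epsilon_{L}$ whenever $L\le R$. Because $a\in S_{\delta,L,R}(c)$ exactly when $\Psi_{L,R}(ax)<\delta$, Markov's inequality together with the displayed limit yields, for every $\delta>0$ and all $L\le R$,
\[
1-\ld(S_{\delta,L,R}(c))=\limsup_{N}\frac{|\{a\in F_{N}:\Psi_{L,R}(ax)\ge\delta\}|}{|F_{N}|}\le\frac1\delta\int_{X}\Psi_{L,R}\le\frac{\epsilon_{L}}{\delta}.
\]
Given $\delta>0$ I would then choose $N_{\delta}$ with $\epsilon_{N_{\delta}}<\delta^{2}$; for all $N_{\delta}\le L\le R$ this gives $\ld(S_{\delta,L,R}(c))\ge 1-\epsilon_{L}/\delta\ge 1-\epsilon_{N_{\delta}}/\delta>1-\delta$, which is \eqref{eq:cond}.

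The routine ingredients are the identification of $\phi_{n}$ as an $L^{1}$-convolution of $f$ and the Stone--Weierstrass approximation of $t\mapsto|t|$ and of $\max$; one should also be careful that full genericity gives a limit for each \emph{fixed} pair $(L,R)$ rather than a bound uniform in $N$ --- but that is exactly the shape of \eqref{eq:cond}. The one genuinely essential point, and the reason the BFKO condition is useful at all, is the transfer through full genericity: \eqref{eq:BFKO-cond} controls $\phi_{n}$ only on a set of full $\mu$-measure, whereas the orbit $\{ax:a\in G\}$ is usually $\mu$-null, so \eqref{eq:BFKO-cond} cannot be applied to it directly and must instead be converted into a statement about orbital density averages by comparing them with integrals over $X$.
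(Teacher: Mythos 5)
Your proof is correct and follows essentially the same route as the paper: identify the correlations $\E_{g\in F_n}c(g)c(ga)$ as values $\phi_n(ax)$ of convolutions lying in the closed convolution-invariant algebra, use Stone--Weierstrass to place a function of the $\phi_n$'s in that algebra, and transfer \eqref{eq:BFKO-cond} to the orbit via full genericity. The only (cosmetic) difference is that you control the bad set by dominated convergence plus Chebyshev applied to $\max_n|\phi_n|$, where the paper uses Egorov plus a continuous cutoff $\eta$.
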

\begin{proof}
Let $\delta>0$ be arbitrary.
By Egorov's theorem there exists an $N_{\delta}\in\N$ and a set $\Xi\subset X$ of measure $>1-\delta$ such that for every $n\geq N_{\delta}$ and $\xi\in\Xi$ the average in \eqref{eq:BFKO-cond} is bounded by $\delta/2$.

Let $N_{\delta} \leq L \leq R$ be arbitrary and choose a continuous function $\eta : \R^{[L,R]} \to [0,1]$ that is $1$ when all its arguments are less than $\delta/2$ and $0$ when one of its arguments is greater than $\delta$ (here and later $[L,R]=\{L,L+1,\dots,R\}$).
Then by the Stone--Weierstrass theorem the function
\[
h(\xi):=\eta(|\E_{g\in F_{L}}f(gx)f(g\xi)|,\dots,|\E_{g\in F_{R}}f(gx)f(g\xi)|)
\]
lies in the closed convolution-invariant subalgebra of $L^{\infty}(X)$ spanned by $f$.

By the assumption $x$ is generic for $h$.
Since $h|_{\Xi} \equiv 1$, we have $\int_{X} h > 1-\delta$.
Hence the set of $a$ such that $h(ax)>0$ has lower density $>1-\delta$.

For every such $a$ we have
\[
| \E_{g\in F_{n}} f(gax)f(gx) | < \delta,
\quad L\leq n\leq R.
\qedhere
\]
\end{proof}
\begin{proof}[Proof of Lemma~\ref{lem:ae-perp}]
Let $f\in L^{\infty}(X)$ be orthogonal to the Kronecker factor.
By \cite[Theorem 1]{MR0174705} this implies that the ergodic averages of $f\otimes f$ converge to $0$ in $L^{2}(X\times X)$.
By the Lindenstrauss pointwise ergodic theorem \cite[Theorem 1.2]{MR1865397} this implies \eqref{eq:BFKO-cond} for a.e.\ $x\in X$.
Since a.e.\ $x\in X$ is also fully generic for $f$, the conclusion follows from Lemma~\ref{lem:to-zero-in-density}.
\end{proof}

\subsection{Lindenstrauss covering lemma}
Given a collection of intervals, the classical Vitali covering lemma allows one to select a disjoint subcollection that covers a fixed fraction of the union of the full collection.
The appropriate substitute in the setting of tempered F\o{}lner sequences is the Lindenstrauss random covering lemma.
It allows one to select a \emph{random} subcollection that is \emph{expected} to cover a fixed fraction of the union and to be \emph{almost} disjoint.
The almost disjointness means that the expectation of the counting function of the subcollection is uniformly bounded by a constant.
As such, the Vitali lemma is stronger whenever it applies, and the reader who is only interested in the standard F\o{}lner sequence in $\Z$ can skip this subsection.

We use two features of Lindenstrauss' proof of the random covering lemma that we emphasize in its formulation below.
The first feature is that the second moment (and in fact all moments) of the counting function is also uniformly bounded (this follows from the bound for the moments of a Poisson distribution).
The second feature is that the random covering depends measurably on the data.
We choose to include the explicit construction of the covering in the statement of the lemma instead of formalizing this measurability statement.
To free up symbols for subsequent use we replace the auxiliary parameter $\delta$ in Lindenstrauss' statement of the lemma by $C\inv$ and expand the definition of $\gamma$.

For completeness we recall that a \emph{Poisson point process} with intensity $\alpha$ on a measure space $(X,\mu)$ is a counting (i.e.\ atomic, with at most countably many atoms and masses of atoms in $\N$) measure-valued map $\Upsilon:\Omega \to M(X)$ such that for every finite measure set $A\subset X$ the random variable $\omega\mapsto \Upsilon(\omega)(A)$ is Poisson with mean $\alpha\mu(A)$ and for any disjoint sets $A_{i}$ the random variables $\omega\mapsto \Upsilon(\omega)|_{A_{i}}$ are jointly independent (here and later $\Upsilon|_{A}$ is the measure $\Upsilon|_{A}(B)=\Upsilon(A\cap B)$).
It is well-known that on every $\sigma$-finite measure space there exists a Poisson process.
\begin{lemma}[{\cite[Lemma 2.1]{MR1865397}}]
\label{lem:lindenstrauss-covering}
Let $G$ be a lcsc group with left Haar measure $|\cdot|$.
Let $(F_{N})_{N=L}^{R}$ be a $C$-tempered sequence.
Let $\Upsilon_{N}:\Omega_{N}\to M(G)$ be independent Poisson point processes with intensity $\alpha_{N}=\delta/|F_{N}|$ w.r.t.\ the right Haar measure $\rho$ on $G$ and let $\Omega:=\prod_{N}\Omega_{N}$.

Let $A_{N|R+1}\subset G$, $N=L,\dots,R$, be sets of finite measure.
Define (dependent!) counting measure-valued random variables $\Sigma_{N} : \Omega \to M(G)$ in descending order for $N=R,\dots,L$ by
\begin{enumerate}
\item $\Sigma_{N} := \Upsilon_{N}|_{A_{N|N+1}}$,
\item $A_{i|N} := A_{i|N+1}\setminus F_{i}\inv F_{N}\Sigma_{N} = \{ a\in A_{i|N+1} : F_{i}a\cap F_{N}\Sigma_{N}=\emptyset\}$ for $i<N$.
\end{enumerate}
Then for the counting function
\[
\Lambda = \sum_{N}\Lambda_{N},
\quad
\Lambda_{N}(g)(\omega) = \sum_{a\in\Sigma_{N}(\omega)}1_{F_{N}a}(g)
\]
the following holds.
\begin{enumerate}
\item $\Lambda$ is a measurable, a.s.\ finite function on $\Omega\times G$,
\item $\E(\Lambda(g)|\Lambda(g)\geq 1) \leq 1+C\inv$ for every $g\in F$,
\item $\E(\Lambda^{2}(g)|\Lambda(g)\geq 1) \leq (1+C\inv)^{2}$ for every $g\in F$,
\item $\E(\int\Lambda) \geq (2C)\inv |\cup_{N=L}^{R}A_{N|R+1}|$.
\end{enumerate}
\end{lemma}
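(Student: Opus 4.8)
The plan is to reproduce the argument of \cite{MR1865397}, which rests on a deterministic structural fact --- selected balls at different scales do not overlap --- and on the resulting observation that, for each fixed $g$, at most one of the $\Lambda_{N}(g)$ is nonzero. For the structural fact, fix $g\in G$ and a sample point $\omega$, and suppose $g\in F_{N}a\cap F_{M}b$ with $a\in\Sigma_{N}$, $b\in\Sigma_{M}$, $N<M$. Since $\Sigma_{N}$ is supported on $A_{N|N+1}\subset A_{N|M}=A_{N|M+1}\setminus F_{N}\inv F_{M}\Sigma_{M}$, the point $a$ lies outside $F_{N}\inv F_{M}\Sigma_{M}$, so $F_{N}a\cap F_{M}\Sigma_{M}=\emptyset$; but $g\in F_{N}a$ and $g\in F_{M}b\subset F_{M}\Sigma_{M}$, a contradiction. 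Hence for each $g$ at most one index $N$ has $\Lambda_{N}(g)\geq 1$, so $\Lambda(g)$ equals whichever single term $\Lambda_{N}(g)$ is nonzero; in particular $\Lambda(g)^{2}=\sum_{N}\Lambda_{N}(g)^{2}$ and $\{\Lambda(g)\geq 1\}=\bigsqcup_{N}\{\Lambda_{N}(g)\geq 1\}$.

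For the moment bounds I would condition on the $\sigma$-algebra $\mathcal{F}_{>N}$ generated by $\Upsilon_{N+1},\dots,\Upsilon_{R}$. The set $A_{N|N+1}$ is $\mathcal{F}_{>N}$-measurable while $\Upsilon_{N}$ is independent of $\mathcal{F}_{>N}$, so conditionally on $\mathcal{F}_{>N}$ the variable $\Lambda_{N}(g)=\Upsilon_{N}(F_{N}\inv g\cap A_{N|N+1})$ is Poisson with parameter $\lambda_{N}:=\alpha_{N}\rho(F_{N}\inv g\cap A_{N|N+1})\leq\alpha_{N}\rho(F_{N}\inv)=\alpha_{N}|F_{N}|=C\inv$. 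From $e^{\lambda}\geq 1+\lambda$, equivalently $1-e^{-\lambda}\geq\lambda/(1+\lambda)$, one gets for $0\leq\lambda\leq C\inv$ both $\lambda\leq(1+C\inv)(1-e^{-\lambda})$ and $\lambda+\lambda^{2}\leq(1+C\inv)^{2}(1-e^{-\lambda})$. Using $\E\Lambda_{N}(g)=\E\lambda_{N}$, $\E(\Lambda_{N}(g)^{2})=\E(\lambda_{N}+\lambda_{N}^{2})$ and $\mathbb{P}(\Lambda_{N}(g)\geq 1)=\E(1-e^{-\lambda_{N}})$, applying these two inequalities pointwise to $\lambda_{N}\in[0,C\inv]$, and summing over $N$ (which is legitimate by the structural fact, giving $\E\Lambda(g)=\sum_{N}\E\Lambda_{N}(g)$, $\E(\Lambda(g)^{2})=\sum_{N}\E(\Lambda_{N}(g)^{2})$, $\mathbb{P}(\Lambda(g)\geq 1)=\sum_{N}\mathbb{P}(\Lambda_{N}(g)\geq 1)$), one obtains $\E\Lambda(g)\leq(1+C\inv)\mathbb{P}(\Lambda(g)\geq 1)$ and $\E(\Lambda(g)^{2})\leq(1+C\inv)^{2}\mathbb{P}(\Lambda(g)\geq 1)$, which are the two conditional-moment bounds. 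Measurability of $\Lambda$ is immediate from the explicit construction, and a.s.\ finiteness follows from the estimate proved next.

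For the covering bound I would first compute $\E(\int\Lambda)$: from $\int\Lambda=\sum_{N}|F_{N}|\,\Sigma_{N}(G)=\sum_{N}|F_{N}|\,\Upsilon_{N}(A_{N|N+1})$ and the $\mathcal{F}_{>N}$-measurability of $A_{N|N+1}$ one obtains $\E(\int\Lambda)=\sum_{N}|F_{N}|\alpha_{N}\,\E|A_{N|N+1}|=C\inv\sum_{N}\E|A_{N|N+1}|$, which in particular is $\leq C\inv\sum_{N}|A_{N|R+1}|<\infty$. Put $U:=\bigcup_{N}A_{N|R+1}$ and $V:=\bigcup_{N}A_{N|N+1}\subset U$. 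The key point is that $U\setminus V$ is covered by thickened selected balls: if $g\in U\setminus V$ and $N_{0}$ is the largest index with $g\in A_{N_{0}|R+1}$, then $g\notin A_{N_{0}|N_{0}+1}=A_{N_{0}|R+1}\setminus\bigcup_{M>N_{0}}F_{N_{0}}\inv F_{M}\Sigma_{M}$ forces $g\in F_{N_{0}}\inv F_{M}\Sigma_{M}\subset B_{M}\Sigma_{M}$ for some $M>N_{0}$, where $B_{M}:=\bigcup_{i<M}F_{i}\inv F_{M}$. Hence $U\setminus V\subset\bigcup_{M}B_{M}\Sigma_{M}$, and since $|B_{M}|<C|F_{M}|$ by $C$-temperedness, $|U\setminus V|\leq\sum_{M}|B_{M}|\,\Sigma_{M}(G)\leq C\sum_{M}|F_{M}|\,\Sigma_{M}(G)=C\int\Lambda$. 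As $V\subset U$, this gives the deterministic bound $|U|\leq\sum_{N}|A_{N|N+1}|+C\int\Lambda$; taking expectations and substituting the identity for $\E(\int\Lambda)$ yields $|U|\leq 2C\,\E(\int\Lambda)$, which is the covering bound with $\gamma=(2C)\inv$.

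I expect the covering bound to be the main obstacle: the crucial move is the decomposition $U=V\sqcup(U\setminus V)$ together with the observation that whatever leaves the candidate list at a given scale gets trapped inside a thickened selected ball of a strictly larger scale, which is precisely where $C$-temperedness is used. A minor but pervasive technical point is the interplay between the left Haar measure $|\cdot|$ and the right Haar measure $\rho$; it causes no trouble here because a group admitting a F\o{}lner sequence is unimodular, so $|F_{N}a|=|F_{N}|=\rho(F_{N}\inv)$ and $\rho=|\cdot|$ may be used freely throughout.
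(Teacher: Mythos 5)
The paper does not actually prove this lemma --- it is quoted (with the renaming $\delta\to C\inv$, which is why your reading $\lambda_{N}\le\alpha_{N}|F_{N}|=C\inv$ is the intended one despite the statement still displaying $\alpha_{N}=\delta/|F_{N}|$) from Lindenstrauss's paper, so the comparison is with that source. Your reconstruction has the right architecture and matches it: the deterministic fact that selected translates at \emph{different} scales are disjoint (your derivation from $A_{N|N+1}\subset A_{N|M}$ is correct), the conditional-Poisson computation given $\mathcal{F}_{>N}$ together with $1-e^{-\lambda}\ge\lambda/(1+\lambda)$, and the decomposition $U=V\cup(U\setminus V)$ with temperedness controlling $|U\setminus V|$ are exactly the ingredients needed, and the estimates all close.

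The one genuine error is the closing claim that ``a group admitting a F\o{}lner sequence is unimodular.'' This is false: the $ax+b$ group is solvable, hence amenable, hence admits left F\o{}lner sequences with respect to its left Haar measure, yet it is not unimodular --- and this is precisely why Lindenstrauss (and the paper) run the Poisson processes with respect to the \emph{right} Haar measure. You invoke unimodularity at two points: the identity $\int\Lambda=\sum_{N}|F_{N}|\,\Sigma_{N}(G)$ and the bound $|U\setminus V|\le\sum_{M}|B_{M}|\,\Sigma_{M}(G)$, both of which use $|F_{N}a|=|F_{N}|$ (resp.\ $|B_{M}b|=|B_{M}|$). In general $|F_{N}a|=\Delta(a)|F_{N}|$ with $\Delta$ the modular function, so the correct statements are $\int\Lambda=\sum_{N}|F_{N}|\sum_{a\in\Sigma_{N}}\Delta(a)$ and $|U\setminus V|\le\sum_{M}|B_{M}|\sum_{b\in\Sigma_{M}}\Delta(b)\le C\int\Lambda$. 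The argument then still goes through because $\Delta(a)\,\dif\rho(a)=\dif a$ (left Haar measure, with the normalization $\rho(A)=|A\inv|$), so Campbell's formula yields $\E\bigl(\int\Lambda\bigr)=\sum_{N}|F_{N}|\alpha_{N}\,\E\int_{A_{N|N+1}}\Delta\,\dif\rho=C\inv\sum_{N}\E|A_{N|N+1}|$, which is exactly the quantity your covering bound needs; the rest of that derivation is unaffected. (Your step bounding $\lambda_{N}$ via $\rho(F_{N}\inv g)=\rho(F_{N}\inv)=|F_{N}|$ is fine without unimodularity, as it uses only right-invariance of $\rho$ and the stated normalization.) So: correct approach and correct conclusions, but the unimodularity shortcut must be replaced by explicit bookkeeping of the modular function.
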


\section{Orthogonality criterion for amenable groups}
In our view, the BFKO orthogonality criterion is a statement about bounded measurable functions on $G$.
We encapsulate it in the following lemma.
\begin{lemma}
\label{lem:orth}
Let $(F_{N})$ be a $C$-tempered strong F\o{}lner sequence.

Let $\epsilon>0$, $K\in\N$ and $\delta>0$ be sufficiently small depending on $\epsilon,K$.
Let $c\in L^{\infty}(G)$ be bounded by $1$ and $[L_{1},R_{1}],\dots,[L_{K},R_{K}]$ be a sequence of increasing intervals of natural numbers such that the following holds for any $j<k$ and any $N\in [L_{k},R_{k}]$.
\begin{enumerate}
\item $|\partial_{F_{(j)}}F_{N}|<\delta |F_{N}|$, where $F_{(j)}=\cup_{N=L_{j}}^{R_{j}} F_{N}$
\item $S_{\delta,L_{j},R_{j}}(c)$ has density at least $1-\delta$ in $F_{N}$.
\end{enumerate}
Let $f\in L^{\infty}(G)$ be bounded by $1$ and consider the sets
\[
A_{N}:=
\{a : |\E_{g\in F_{N}}c(g)f(ga)| \geq \epsilon\},
\quad
A_{(j)} := \cup_{N=L_{j}}^{R_{j}}A_{N}.
\]
Then for every compact set $I\subset G$ with $|I\cap F_{(j)}\inv I^{\complement}|<\delta |I|$ for every $j$ we have
\[
\frac1K \sum_{j=1}^{K} d_{I}(A_{(j)}) < \frac{5C}{\epsilon \sqrt{K}}.
\]
\end{lemma}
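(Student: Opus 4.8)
The plan is to run the Bourgain--Furstenberg--Katznelson--Ornstein argument with Lemma~\ref{lem:lindenstrauss-covering} playing the role of the Vitali covering lemma: hypothesis~(1) will let a ``small'' F\o{}lner set (indexed in an early block) sit inside a ``large'' one (indexed in a later block), and hypothesis~(2) will supply the almost orthogonality of translates of $c$. We may take $C\geq1$. First the reductions. For $a\in A_N$ let $\sigma_N(a)\in\{\pm1\}$ be the sign of $\E_{g\in F_N}c(g)f(ga)$, so that $\sigma_N(a)\int_{F_N}c(g)f(ga)\dif g\geq\epsilon|F_N|$, and for each $j$ put $I_j:=I\setminus F_{(j)}\inv I^{\complement}$, so $|I\setminus I_j|<\delta|I|$ and $F_Na\subset I$ whenever $a\in I_j$, $L_j\leq N\leq R_j$; since $d_I(A_{(j)})\leq|A_{(j)}\cap I_j|/|I|+\delta$ it suffices to bound $\frac1K\sum_j|A_{(j)}\cap I_j|$. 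I would apply Lemma~\ref{lem:lindenstrauss-covering} separately for each $j$ to the $C$-tempered sequence $(F_N)_{N=L_j}^{R_j}$ with $A_{N|R_{j}+1}:=A_N\cap I_j$, obtaining counting measures $\Sigma_N^{(j)}$ and a counting function $\Lambda^{(j)}$, supported in $I$, with $\E\int\Lambda^{(j)}\geq(2C)\inv|A_{(j)}\cap I_j|$ and the conditional moment bounds $1+C\inv$, $(1+C\inv)^2$ of that lemma; the families for distinct $j$ are independent. Then set
\[
\Phi^{(j)}(x):=\sum_{N=L_j}^{R_j}\sum_{a\in\Sigma_N^{(j)}}\sigma_N(a)\,1_{F_Na}(x)\,c(xa\inv),
\]
which is supported in $I$ and satisfies $|\Phi^{(j)}|\leq\Lambda^{(j)}$ pointwise, and note that the substitution $x=ga$ turns the defining inequality of $A_N$ into the deterministic bound $\int_I f\,\Phi^{(j)}\geq\epsilon\int\Lambda^{(j)}$.

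Summing this over $j$, using $|f|\leq1$ and Cauchy--Schwarz on $I$, then taking expectations and applying Cauchy--Schwarz in $\Omega$, gives
\[
\epsilon\,\E\sum_{j=1}^{K}\int\Lambda^{(j)}\ \leq\ |I|^{1/2}\Bigl(\E\bigl\|\textstyle\sum_{j=1}^{K}\Phi^{(j)}\bigr\|_{L^{2}(I)}^{2}\Bigr)^{1/2}.
\]
I would expand the square as $\sum_{j,k}\E\int\Phi^{(j)}\Phi^{(k)}$. The diagonal is controlled by the second moment, $\E\int(\Phi^{(j)})^{2}\leq\E\int(\Lambda^{(j)})^{2}\leq(1+C\inv)^{2}|I|\leq4|I|$, hence by $4K|I|$ in total. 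Granting that the off-diagonal contributes at most $K|I|$, one gets $\E\|\sum_j\Phi^{(j)}\|_2^2\leq5K|I|$, hence $\E\sum_j\int\Lambda^{(j)}\leq\epsilon\inv\sqrt{5K}\,|I|$, hence $\sum_j|A_{(j)}\cap I_j|\leq2C\epsilon\inv\sqrt{5K}\,|I|$, and adding the $\delta$ lost on $I\setminus I_j$ yields $\frac1K\sum_j d_I(A_{(j)})\leq 2\sqrt5\,C/(\epsilon\sqrt K)+\delta<5C/(\epsilon\sqrt K)$ once $\delta$ is small in terms of $\epsilon$, $K$ (and the fixed $C$). So the whole lemma reduces to the off-diagonal estimate, and that is where I expect essentially all the difficulty to be.

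For the off-diagonal, fix $j<k$. Expanding $\int\Phi^{(j)}\Phi^{(k)}$ into a sum over $(N,a)$, $(M,b)$ with $N\in[L_j,R_j]$, $M\in[L_k,R_k]$, $a\in\Sigma_N^{(j)}$, $b\in\Sigma_M^{(k)}$ and $F_Na\cap F_Mb\neq\emptyset$, one meets the interaction integrals $\int_{F_Na\cap F_Mb}c(xa\inv)c(xb\inv)\dif x$. Writing $\gamma=ab\inv$ one has $\gamma\in F_{(j)}\inv F_M$; hypothesis~(1) says that for all but a $\delta$-fraction of such $\gamma$ one has $F_{(j)}\gamma\subset F_M$, so $F_Na\subset F_Mb$, and then $y=xa\inv$ identifies the interaction integral with $|F_N|\,\E_{g\in F_N}c(g)c(g\gamma)$, which (as $N\in[L_j,R_j]$) has absolute value $<\delta|F_N|$ provided $\gamma\in S_{\delta,L_j,R_j}(c)$; and hypothesis~(2), applied at the large scale $M$, says $S_{\delta,L_j,R_j}(c)$ misses only a $\delta$-fraction of the relevant $\gamma$. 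Thus every interaction integral is either a gain of a factor $\delta$ over its trivial bound $|F_Na|$, or is attached to one of two $\delta$-sparse families of displacements (boundary ones, in $\partial_{F_{(j)}}F_M$, or non-orthogonal ones, with $|\E_{g\in F_N}c(g)c(g\gamma)|\geq\delta$ for some such $N$). Feeding these three cases back into $\Lambda^{(j)},\Lambda^{(k)}$ and using the moment estimates of Lemma~\ref{lem:lindenstrauss-covering} should yield $|\E\int\Phi^{(j)}\Phi^{(k)}|\leq(\text{const}(C))\,\delta|I|$; summing over the fewer than $K^{2}$ pairs and taking $\delta$ small in terms of $K$ (and $C$) then bounds the off-diagonal by $K|I|$, closing the argument. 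The delicate point here --- and in my view the main obstacle --- is to carry out this count \emph{without} losing a factor equal to the number of scales in a block; this forces one to use the structure of the covering construction (e.g.\ that a point survives only $O(\delta\inv)$ scales in expectation, so that per-scale masses sum geometrically rather than linearly) instead of estimating scale by scale, and it is this reconciliation of condition~\eqref{eq:cond} with the F\o{}lner geometry that constitutes essentially the whole proof.
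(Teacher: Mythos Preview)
Your overall architecture --- build random functions $\Phi^{(j)}$ via the Lindenstrauss covering lemma, correlate their sum with $f$, then close with Cauchy--Schwarz --- is exactly the paper's, and the diagonal is handled correctly. The divergence is at the off-diagonal, which is precisely the obstacle you flag and leave open.

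You run the covering lemma for the $K$ blocks \emph{independently} and then try to bound $\E\int\Phi^{(j)}\Phi^{(k)}$ after the fact by sorting the displacements $\gamma=ab^{-1}$ into good, boundary, and non-orthogonal types. The paper does not attempt this count. Instead it constructs the coverings \emph{sequentially, in reverse order} $k=K,K-1,\dots,1$: when block $j$ is reached, the admissible origin set is not $A_{(j)}\cap I_j$ but
\[
O^{(j)}:=A_{(j)}\cap I_j\ \setminus\ \bigcup_{k>j}\bigcup_{M=L_k}^{R_k}\bigcup_{a'\in\Sigma_M}\bigl(\partial_{F_{(j)}}(F_M)\cup(S_{\delta,L_j,R_j}(c)^{\complement}\cap F_M)\bigr)a',
\]
i.e.\ from each already-placed large tile $F_Ma'$ one deletes its $F_{(j)}$-boundary together with its non-$S$ part. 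Every surviving origin $a\in O^{(j)}$ then satisfies, for every later $(M,a')$, either $F_Na\cap F_Ma'=\emptyset$ or $F_Na\subset F_Ma'$ with $aa'^{-1}\in S_{\delta,L_j,R_j}(c)$; so every interaction integral is $<\delta|F_N|$ \emph{by construction}, and the off-diagonal bound on $\E\int c^{(j)}c^{(k)}$ is immediate. The cost --- that $O^{(j)}$ may be smaller than $A_{(j)}\cap I_j$ --- is paid at the very end: hypotheses~(1) and~(2) say the deleted portion of each tile has measure $\leq2\delta|F_M|$, so $\E|A_{(j)}\cap I_j\setminus O^{(j)}|\leq2\delta\sum_{k>j}\E\int\Lambda^{(k)}\leq2\delta K(1+C^{-1})|I|$, which is absorbed for $\delta$ small in $K$.

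In short, the ``delicate point'' you isolate is not resolved by a sharper count but sidestepped by making the $j$-th covering depend on the coverings for $k>j$ so that bad interactions are excluded from the outset. Your independent-coverings route may well be completable, but as written the argument is only a sketch at exactly the step that carries the lemma, and the sequential construction is the missing idea.
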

Under the assumption \eqref{eq:cond} a sequence $[L_{1},R_{1}],\dots,[L_{K},R_{K}]$ with the requested properties can be constructed for any $K$.
\begin{proof}
For $1\leq k\leq K$, $L_{k}\leq N \leq R_{k}$ let $\Upsilon_{N}:\Omega_{N}\to M(G)$ be independent Poisson point processes of intensity $\alpha_{N}=\delta |F_{N}|\inv$ w.r.t.\ the \emph{right} Haar measure.

Let $\Omega = \prod_{k=1}^{K}\prod_{N=L_{k}}^{R_{k}}\Omega_{N}$.
We construct random variables $\Sigma_{N} : \Omega \to M(A_{N})$ that are in turn used to define functions
\[
c^{(k)} := \sum_{N=L_{k}}^{R_{k}}\sum_{a\in\Sigma_{N}} \pm c|_{F_{N}}(\cdot a\inv),
\quad k=1,\dots,K,
\]
where the sign is chosen according to as to whether $\E_{g\in F_{N}}c(g)f(ga)$ is positive or negative.
These functions will be mutually nearly orthogonal on $I$ and correlate with $f$, from where the estimate will follow by a standard Hilbert space argument.

We construct the random variables in reverse order, beginning with $k=K$.
Let the set of ``admissible origins'' be
\[
O^{(j)}:=A_{(j)}\cap
\Big(\big(I
\setminus F_{(j)}\inv I^{\complement}
\big)
\setminus \cup_{k=j+1}^{K}\cup_{N=L_{k}}^{R_{k}}\cup_{a\in\Sigma_{N}} (\partial_{F_{(j)}}(F_{N}) \cup (S_{\delta,L_{j},R_{j}}(c)^{\complement} \cap F_{N}))a
\Big).
\]
This set consists of places where we could put copies of initial segments of $c$ in such a way that they would correlate with $f$ and would not correlate with the copies that were already used in the functions $c^{(k)}$ for $k>j$.

Let $A_{N|R_{j}+1} := O^{(j)}\cap A_{N}$ and construct random coverings $\Sigma_{N}$, $N=L_{j},\dots,R_{j}$ as in Lemma~\ref{lem:lindenstrauss-covering} (if the Vitali lemma is available then one can use deterministic coverings that it provides instead).
By Lemma~\ref{lem:lindenstrauss-covering} the counting function
\[
\Lambda^{(j)} = \sum_{N=L_{j}}^{R_{j}}\Lambda_{N},
\quad
\Lambda_{N}(g)(\omega) = \sum_{a\in\Sigma_{N}(\omega)}1_{F_{N}a}(g)
\]
satisfies
\begin{enumerate}
\item $\E(\Lambda^{(j)}(g)) \leq (1+C\inv)$ for every $g\in G$
\item $\E(\Lambda^{(j)}(g)^{2}) \leq (1+C\inv)^{2}$ for every $g\in G$
\item $\E(\int\Lambda^{(j)}) \geq (2C)\inv |O^{(j)}|$.
\end{enumerate}
In particular, the last condition implies that
\[
\E\int_{I} c^{(j)}f > \epsilon(2C)\inv |O^{(j)}|,
\]
while the second shows that $\|c^{(j)}\|_{L^{2}(\Omega\times I)}\leq (1+C\inv)|I|^{1/2}$.
Moreover, it follows from the definition of $O^{(j)}$ that
\[
|\E\int_{I} c^{(j)} c^{(k)} | \leq |I|\delta(1+C\inv)
\]
whenever $j<k$.
Using the fact that $|c^{(j)}|\leq \Lambda^{(j)}$ and the Hölder inequality we obtain
\begin{multline*}
\sum_{j=1}^{K} \epsilon(2C)\inv \E|O^{(j)}|
< \E\int_{I} \sum_{j=1}^{K} c^{(j)}f\\
\leq \big( \E\int_{I} \big( \sum_{j=1}^{K} c^{(j)} \big)^{2} \big)^{1/2} |I|^{1/2}
< |I| \sqrt{K(1+C\inv)^{2} + K^{2}\delta(1+C\inv)}.
\end{multline*}
This can be written as
\[
\frac1K \sum_{j=1}^{K} \E|O^{(j)}|
< \frac{2C|I|}{\epsilon} \sqrt{(1+C\inv)^{2}/K+\delta(1+C\inv)}.
\]
Finally, the set $O^{(j)}$ has measure at least
\begin{multline*}
|I|(d_{I}(A_{(j)})-\delta)
-\sum_{k=j+1}^{K}\sum_{N=L_{k}}^{R_{k}}\sum_{a\in\Sigma_{N}} (|\partial_{F_{(j)}}(F_{N}a)|+|(S_{\delta,L_{j},R_{j}}(c)^{\complement} \cap F_{N})a|)\\
\geq
|I| (d_{I}(A_{(j)})-\delta)
-2\delta \sum_{k=j+1}^{K}\sum_{N=L_{k}}^{R_{k}}\sum_{a\in\Sigma_{N}} |F_{N}a|,
\end{multline*}
(here we have used the largeness assumptions on $L_{k}$), so
\[
\E |O^{(j)}|
\geq
|I| (d_{I}(A_{(j)})-\delta)
-2\delta (K-j) |I| (1+C\inv)
>
|I| (d_{I}(A_{(j)})-4\delta K)
\]
and the conclusion follows provided that $\delta$ is sufficiently small.
\end{proof}

The BFKO criterion for measure-preserving systems follows by a transference argument.
\begin{proof}[Proof of Theorem~\ref{thm:good-weight-ptw-conv-to-zero}]
Assume that the conclusion fails for some measure-preserving system $(X,G)$ and $f\in L^{\infty}$.
Then we obtain some $\epsilon>0$ and a set of positive measure $\Xi\subset X$ such that
\[
\limsup_{N\to\infty} | \E_{g\in F_{N}}c(g)f(gx) | > 2\epsilon
\quad\text{for all } x\in\Xi.
\]
We may assume $\mu(\Xi)>\epsilon$.
Shrinking $\Xi$ slightly (so that $\mu(\Xi)>\epsilon$ still holds) we may assume that for every $\ul{N}\in\N$ there exists $F(\ul{N})\in\N$ (independent of $x$) such that for every $x\in\Xi$ there exists $\ul{N}\leq N\leq F(\ul{N})$ such that the above average is bounded below by $2\epsilon$.

Let $K > 25 C^{2} \epsilon^{-4}$ and $[L_{1},R_{1}],\dots,[L_{K},R_{K}]$ be as in Lemma~\ref{lem:orth} with $R_{j}=F(L_{j})$.
In this case that lemma says that at least one of the sets $\cup_{N=L_{j}}^{R_{j}}A_{N}$ has lower density less than $\epsilon$.

Choose continuous functions $\eta_{j} : \R^{[L_{j},R_{j}]}\to [0,1]$ that are $1$ when at least one of their arguments is greater than $2\epsilon$ and $0$ if all their arguments are less than $\epsilon$.
Let
\[
h(x) := \prod_{j=1}^{K} \eta_{j}(|\E_{g\in F_{L_{j}}}c(g)f(gx)|,\dots,|\E_{g\in F_{R_{j}}}c(g)f(gx)|).
\]
By construction of $F$ we know that $h|_{\Xi}\equiv 1$, so that $\int_{X} h > \epsilon$.
Let $x_{0}$ be a generic point for $h$ (e.g.\ any fully generic point for $f$), then $\ld\{a : h(ax_{0})>0\} > \epsilon$.
In other words,
\[
\ld\{ a : \forall j\leq K\,
\exists N \in [L_{j},R_{j}]\,
| \E_{g\in F_{N}} c(g)f(gax_{0}) | \geq \epsilon
\} > \epsilon.
\]
This contradicts Lemma~\ref{lem:orth} with $f(g)=f(gx_{0})$.
\end{proof}

For translations on compact groups we obtain the same conclusion everywhere.
It is not clear to us whether an analogous statement holds for general uniquely ergodic systems.
\begin{corollary}
\label{cor:good-weight-everywhere-conv-to-zero}
Let $G$ be a lcsc group with a $C$-tempered strong F\o{}lner sequence $(F_{n})$.
Let $c\in L^{\infty}(G)$ be a function bounded by $1$ that satisfies the condition \eqref{eq:cond}.
Let also $\Omega$ be a compact group and $\chi : G\to\Omega$ a continuous homomorphism.
Then for every $\phi\in C(\Omega)$ we have
\[
\lim_{N\to\infty} \E_{g\in F_{N}} c(g) \phi(\chi(g)\omega) = 0
\quad \text{for every } \omega\in\Omega.
\]
\end{corollary}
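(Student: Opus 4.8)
The plan is to bootstrap the almost-everywhere convergence provided by Theorem~\ref{thm:good-weight-ptw-conv-to-zero} up to convergence everywhere, exploiting that on a compact group rotation the ergodic averages of a fixed continuous function form an \emph{equicontinuous} family.

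First I would make two reductions. Since $\chi(g)\omega$ always lies in the coset $\overline{\chi(G)}\,\omega$, replacing $\Omega$ by $\overline{\chi(G)}$ and $\phi$ by $\phi(\,\cdot\,\omega)$ reduces us to the case in which $\chi(G)$ is dense in $\Omega$ and $\omega$ is the identity. With $\m$ the normalized Haar measure, $(\Omega,\m,G)$ is then an ergodic measure-preserving system for the action $g\cdot\omega=\chi(g)\omega$. Next, the closed self-adjoint unital subalgebra of $C(\Omega)$ generated by the orbit $\{\phi(\chi(g)\,\cdot\,):g\in G\}$ is $\Omega$-invariant (as $\Omega=\overline{\chi(G)}$) and separable, the latter because that orbit is a continuous image of the separable group $G$; it therefore equals $q^{*}C(Z)$ for some metrizable compact $\Omega$-space $Z$ with a continuous $\Omega$-equivariant surjection $q\colon\Omega\to Z$, where $\phi=\tilde\phi\circ q$ for a suitable $\tilde\phi\in C(Z)$. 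Pushing $\m$ forward to $\m_{Z}:=q_{*}\m$, the pair $(Z,\m_{Z},G)$ is an ergodic measure-preserving system on a compact metrizable space, and it suffices to prove that $B_{N}(z):=\E_{g\in F_{N}}c(g)\tilde\phi(\chi(g)z)\to0$ for every $z\in Z$.

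By Theorem~\ref{thm:good-weight-ptw-conv-to-zero} applied with $f=\tilde\phi$ there is a set $Z'\subset Z$ with $\m_{Z}(Z')=1$, hence $Z'$ dense, on which $B_{N}\to0$. Now fix a metric $d$ on $Z$, compatible with the topology and invariant under the $\Omega$-action (obtained by averaging any compatible metric over $\Omega$). Uniform continuity of $\tilde\phi$ then gives, for every $\epsilon>0$, a $\delta>0$ such that $d(z,z')<\delta$ implies $|\tilde\phi(\beta z)-\tilde\phi(\beta z')|<\epsilon$ for all $\beta\in\Omega$; since $|c|\le1$ this forces $|B_{N}(z)-B_{N}(z')|\le\epsilon$ for all $N$. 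Hence $(B_{N})_{N}$ is equicontinuous, and an equicontinuous family on a compact space that converges to $0$ on a dense set converges to $0$ uniformly. Thus $B_{N}\to0$ everywhere on $Z$, which is the desired conclusion.

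I do not expect a real obstacle. The substance of the argument is the equicontinuity of the averages $B_{N}$, which is what turns the measure-theoretic statement of Theorem~\ref{thm:good-weight-ptw-conv-to-zero} into a pointwise-everywhere statement; this is exactly the feature unavailable for general uniquely ergodic systems, which is why the analogue there remains unclear, as noted just before the corollary.
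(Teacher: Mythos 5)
Your proof is correct and follows essentially the same route as the paper: reduce to the case where $\chi$ has dense image so the rotation is ergodic, apply Theorem~\ref{thm:good-weight-ptw-conv-to-zero} to get almost-everywhere (hence dense) convergence, and upgrade to everywhere via uniform continuity of $\phi$, i.e.\ equicontinuity of the averages. The only addition is your passage to a metrizable factor $Z$ to ensure the system is standard, a technical point the paper's one-line proof glosses over.
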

\begin{proof}
We may assume that $\chi$ has dense image, so that the translation action by $\chi$ becomes ergodic.
By Theorem~\ref{thm:good-weight-ptw-conv-to-zero} we obtain the conclusion a.e.\ and the claim follows by uniform continuity of $\phi$.
\end{proof}

\section{Return times theorem}
We turn to the deduction of the return times theorem (Theorem~\ref{thm:rtt-amenable}).
This will require two distinct applications of Theorem~\ref{thm:good-weight-ptw-conv-to-zero}.
We begin with a Wiener--Wintner type result.

Recall that the Kronecker factor of a measure-preserving dynamical system corresponds to the reversible part of the Jacobs--de Leeuw--Glicksberg decomposition of the associated Koopman representation.
In particular, it is spanned by the finite-dimensional $G$-invariant subspaces of $L^{2}(X)$.
We refer to \cite{isem-book} for a treatment of the JdLG decomposition.

Let $F \subset L^{2}(X)$ be a $d$-dimensional $G$-invariant subspace and $f\in F$.
We will show that for a.e.\ $x\in X$ we have $f(gx) = \phi(\chi(g)u)$ for some $\phi\in C(U(d))$, continuous representation $\chi : G \to U(d)$, and a.e.\ $g\in G$.
To this end choose an orthonormal basis $(f_{i})_{i=1,\dots,d}$ of $F$.
Then by the invariance assumption we have $f_{i}(g\cdot) = \sum_{j} c_{i,j} f_{j}(\cdot)$, and the matrix $(c_{i,j})=:\chi(g)$ is unitary since the $G$-action on $X$ is measure-preserving.
This gives us a $d$-dimensional measurable representation $\chi$ that is automatically continuous \cite[Theorem 22.18]{0837.43002}.
The point $u=(u_{i})$ is given by the coordinate representation $f=\sum u_{i}f_{i}$.
Thus we have $f(g\cdot) = \sum_{i}(\chi(g)u)_{i} f_{i}(\cdot)$ in $L^{2}(X)$ and hence, fixing some measurable representatives for $f_{i}$'s, a.e.\ on $X$.
By Fubini's theorem we obtain a full measure subset of $X$ such that the above identity holds for a.e.\ $g\in G$.
For every $x$ from this set we obtain the claim with the continuous function $\phi(U) = \sum_{i} (Uu)_{i}f_{i}(x)$.

\begin{corollary}[Wiener--Wintner theorem]
\label{cor:wiener-wintner}
Let $G$ be a lcsc group with a $C$-tempered strong F\o{}lner sequence $(F_{n})$.
Then for every ergodic measure-preserving system $(X,G)$ and every $f\in L^{\infty}(X)$ there exists a full measure set $\tilde X \subset X$ such that the following holds.
Let $\Omega$ be a compact group and $\chi : G\to\Omega$ a continuous homomorphism.
Then for every $\phi\in C(\Omega)$, every $\omega\in\Omega$ and every $x\in \tilde X$ the limit
\[
\lim_{N\to\infty} \E_{g\in F_{N}} f(gx) \phi(\chi(g)\omega)
\]
exists.
\end{corollary}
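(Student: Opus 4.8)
The plan is to decompose $f$ according to the Jacobs--de Leeuw--Glicksberg splitting of $L^{2}(X)$ into its Kronecker (reversible) part and its orthogonal complement, and to treat the two pieces by the two results already at our disposal: the discussion preceding the corollary together with Corollary~\ref{cor:good-weight-everywhere-conv-to-zero} for the Kronecker part, and Theorem~\ref{thm:good-weight-ptw-conv-to-zero} combined with Lemma~\ref{lem:ae-perp} for the orthogonal complement. First I would reduce to the case where $\chi$ has dense image (replacing $\Omega$ by the closure of $\chi(G)$), so that the translation action of $G$ on $\Omega$ is ergodic and we may apply the earlier results; note the conclusion for general $\chi$ follows immediately from this case. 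Then I would fix an orthonormal basis of $L^{2}(X)$ adapted to the JdLG decomposition and write $f = \sum_{k} f_{k} + f_{\perp}$, where each $f_{k}$ lies in a finite-dimensional $G$-invariant subspace $F_{k}$ and $f_{\perp}$ is orthogonal to the Kronecker factor; since $f\in L^{\infty}$, this sum converges in $L^{2}(X)$, and after truncation we may assume it is finite up to an $L^{2}$-error.

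For the Kronecker part, the paragraph preceding the corollary shows that for a.e.\ $x\in X$ each $f_{k}(gx)$ equals $\psi_{k}(\rho_{k}(g)u_{k})$ for a continuous finite-dimensional unitary representation $\rho_{k}\colon G\to U(d_{k})$, a point $u_{k}$, and $\psi_{k}\in C(U(d_{k}))$, for a.e.\ $g\in G$. For such $x$, the product $f_{k}(gx)\phi(\chi(g)\omega)$ is, for a.e.\ $g$, the restriction along the orbit of $g\mapsto (\rho_{k}(g)\times\chi(g))(u_{k},\omega)$ of a continuous function on the compact group $\overline{(\rho_{k}\times\chi)(G)}$; hence the averages $\E_{g\in F_{N}}f_{k}(gx)\phi(\chi(g)\omega)$ converge for every $\omega$, by the mean ergodic theorem for the (uniquely ergodic) translation action on this compact group together with the Lindenstrauss pointwise theorem applied to $X$ — or, more cleanly, by observing that the limit is simply the integral over that compact group and invoking uniform continuity exactly as in the proof of Corollary~\ref{cor:good-weight-everywhere-conv-to-zero}. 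The point is that on the Kronecker part the weight $g\mapsto f_{k}(gx)$ and the test function $g\mapsto\phi(\chi(g)\omega)$ are both pulled back from a single compact group rotation, where Wiener--Wintner is classical.

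For the orthogonal part, Lemma~\ref{lem:ae-perp} gives that for a.e.\ $x\in X$ the map $g\mapsto f_{\perp}(gx)$ satisfies condition~\eqref{eq:cond}; Corollary~\ref{cor:good-weight-everywhere-conv-to-zero}, applied with $c(g)=f_{\perp}(gx)$ and the compact group $\Omega$, then yields $\lim_{N}\E_{g\in F_{N}}f_{\perp}(gx)\phi(\chi(g)\omega)=0$ for \emph{every} $\omega\in\Omega$ and every $\phi\in C(\Omega)$ — uniformly in $\omega$ and $\phi$ within bounded sets, by a Stone--Weierstrass/separability argument so that a single full-measure set $\tilde X$ works for all $(\Omega,\chi,\phi,\omega)$ simultaneously. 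Combining the two parts on the intersection of the relevant full-measure sets, and passing $N\to\infty$ after the finite truncation (controlling the tail $f - \sum_{k\le K}f_{k} - f_{\perp,K}$ in $L^{2}$ uniformly via the Lindenstrauss maximal inequality along the tempered sequence $(F_{n})$), gives convergence of $\E_{g\in F_{N}}f(gx)\phi(\chi(g)\omega)$ for every $x\in\tilde X$, every $\Omega$, every $\chi$, every $\phi$, and every $\omega$. The main obstacle is the uniformity bookkeeping: one must produce a \emph{single} full-measure set $\tilde X$ that is good for \emph{all} choices of the compact group $\Omega$, the homomorphism $\chi$, the function $\phi$, and the base point $\omega$ at once; this is handled by noting that it suffices to quantify over a countable dense family — using separability of $G$, of the continuous functions on the (second countable) closures $\overline{\chi(G)}$, and the density of $\chi(G)$ in $\Omega$ — exactly as in the classical $\Z$-case, the only genuinely new input being that Corollary~\ref{cor:good-weight-everywhere-conv-to-zero} already delivers the orthogonal-part estimate pointwise in $\omega$ rather than merely almost everywhere.
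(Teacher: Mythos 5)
Your proposal is correct and follows essentially the same route as the paper: split $f$ into its Kronecker part and its orthogonal complement, handle the orthogonal part via Lemma~\ref{lem:ae-perp} together with Corollary~\ref{cor:good-weight-everywhere-conv-to-zero}, reduce the Kronecker part to finite-dimensional invariant subspaces using linearity and the Lindenstrauss maximal inequality, and conclude there by realizing both the weight and the test function as continuous functions on the compact group $\overline{(\rho\times\chi)(G)}$, where convergence follows from unique ergodicity (uniqueness of Haar measure). The extra bookkeeping you describe about countable dense families is harmless but not actually needed, since the full-measure sets produced by Lemma~\ref{lem:ae-perp} and by the finite-dimensional representation argument do not depend on $(\Omega,\chi,\phi,\omega)$.
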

\begin{proof}
By Lemma~\ref{lem:ae-perp} and Corollary~\ref{cor:good-weight-everywhere-conv-to-zero} we obtain the conclusion for $f$ orthogonal to the Kronecker factor.

By linearity and in view of the Lindenstrauss maximal inequality \cite[Theorem 3.2]{MR1865397} it remains to consider $f$ in a finite-dimensional invariant subspace of $L^{2}(X)$.
In this case, for a.e.\ $x\in X$ we have $f(gx)=\phi'(\chi'(g)u_{0})$ for some finite-dimensional representation $\chi':G\to U(d)$, some $u_{0}\in U(d)$, some $\phi'\in C(U(d))$ and a.e.\ $g\in G$.
The result now follows from uniqueness of the Haar measure on the closure of $\chi\times\chi'(G)$.
\end{proof}
A different proof using unique ergodicity of an ergodic group extension of a uniquely ergodic system can be found in \cite{MR1195256}.

Finally, the return times theorem follows from a juxtaposition of previous results.
\begin{proof}[Proof of Theorem~\ref{thm:rtt-amenable}]
By Lemma~\ref{lem:ae-perp} and Theorem~\ref{thm:good-weight-ptw-conv-to-zero} the conclusion holds for $f\in L^{\infty}(X)$ orthogonal to the Kronecker factor.

By linearity and in view of the Lindenstrauss maximal inequality \cite[Theorem 3.2]{MR1865397} it remains to consider $f$ in a finite-dimensional invariant subspace of $L^{2}(X)$.
In this case, for a.e.\ $x\in X$ we have $f(gx)=\phi(\chi(g)u_{0})$ for some finite-dimensional representation $\chi:G\to U(d)$, some $u_{0}\in U(d)$, some $\phi\in C(U(d))$ and a.e.\ $g\in G$.
The conclusion now follows from Corollary~\ref{cor:wiener-wintner}.
\end{proof}

\printbibliography
\end{document}